\documentclass{amsart}
\usepackage{amsmath,amsfonts,amsthm,amssymb,indentfirst,epic,xurl,graphics,needspace}

\newtheorem{theorem}{Theorem}

\newtheorem{corollary}[theorem]{Corollary}

\newtheorem{definition}[theorem]{Definition}

\numberwithin{equation}{section}
\numberwithin{theorem}{section}

\newcommand{\N}{\mathbb{N}}

\mathchardef\mhyphen="2D

\usepackage[T1]{fontenc} 

\begin{document}

\begin{center}
\texttt{Comments, corrections,
and related references welcomed, as always!}\\[.5em]
{\TeX}ed \today
\\[.5em]
\vspace{2em}
\end{center}

\title%
[Semigroup homomorphisms and projective rank functions]%
{Criteria for existence of semigroup homomorphisms\\
and projective rank functions}
\thanks{%
Readable at \url{http://math.berkeley.edu/~gbergman/papers/}.
After publication, any updates, errata, related references,
etc., found will be noted at
\url{http://math.berkeley.edu/~gbergman/papers/abstracts}\,.\\
\hspace*{1.5em}Most of this work was done in 1990, while the author
was partly supported by NSF contract DMS\ 90-01234.\\
\hspace*{1.5em}Data sharing not applicable, as no datasets were
generated or analyzed in this work.}

\subjclass[2020]{Primary: 20M15.
Secondary: 06F05, 16D40 
}
\keywords{semigroup homomorphism, orderable semigroup,
rank function on projective modules}

\author{George M.\ Bergman}
\address{Department of Mathematics\\
University of California\\
Berkeley, CA 94720-3840, USA}
\email{gbergman@math.berkeley.edu}

\begin{abstract}
Let $P,$ $S,$ and $T$ be semigroups, $f:P\to S$ and
$g:P\to T$ semigroup homomorphisms, and $X$ a generating set for $S$
(possibly infinite).
Clearly, a {\em necessary} condition for there to
exist a homomorphism $S\to T$ making
a commuting triangle with $f$ and $g$ is that for
every relation $f(p) = w(x_1,\,\dots\,,\,x_n)$ holding in $S,$ with
$p\in P,$ $w$ a semigroup word, and $x_1,\,\dots\,,\,x_n \in X,$
there exist $t_1,\,\dots,\,t_n\in T$
satisfying $g(p) = w(t_1,\,\dots\,,\,t_n).$

Under what assumptions will that also be sufficient?
We show that one such family of assumptions is that
(i)~every element of $S$ is a divisor some element of $f(P),$
(ii)~$T$ is right and left cancellative,
(iii)~$T$ is power-cancellative, i.e,
$x^d = y^d \implies x = y$ for $d > 0,$ and
(iv)~a certain technical condition which, in
particular, holds if $T$ admits a semigroup
ordering with the order-type of the natural numbers.

As an application, we obtain an elementary criterion for the
existence of an integer-valued rank function on
finitely generated projective modules over a ring.
\end{abstract}
\maketitle

\section{Main results}\label{S.main}

Here is a bit of notation and terminology that we will use:

\begin{definition}\label{D.divisor}
If $S$ is a semigroup, $S^1$ will denote the
monoid obtained by adjoining an identity element to $S.$

An element $s$ of a semigroup $S$ will be called
a {\em divisor} of an element $t\in S$
if $t = a\,s\,b$ for some $a, b\in S^1.$

An element $s$ of a semigroup $S$ will be called
a {\em weak divisor} of a
subset $A\subseteq S$ if there exists a positive
integer $d$ such that $s^d$ is a divisor of some element
of $A^d$ {\rm(}i.e., of some product $a_1\dots\,a_d$ with $a_i\in A).$
\end{definition}

My only excuse for the peculiar concept of ``weak
divisor'', and for condition~\eqref{d.fin_wk_div}
of the next theorem, which uses it, is that
these are what were needed to abstract an argument I discovered in
considering rank functions on projective modules over a ring.
That application will be made in~\S\ref{S.rank}.
In \S\ref{S.var+eg} we will, inter alia, look at some simpler
conditions that imply~\eqref{d.fin_wk_div}.
In the meantime, I will mention
that~\eqref{d.fin_wk_div} holds frequently, e.g.,
whenever $T$ is a free semigroup or a free abelian semigroup.
Experience may eventually show that one of the stronger conditions
mentioned in~\S\ref{S.var+eg} covers all cases of interest.

Here is our main result.

\begin{theorem}\label{T.main}
Let $P,$ $S$ and $T$ be semigroups, $f:P\to S$ and $g:P\to T$
semigroup homomorphisms, and $X$ a generating set for $S.$
Suppose that
\begin{equation}\begin{minipage}[c]{35pc}\label{d.oAdiv}
every element of $S$ is a divisor of some element of $f(P),$
\end{minipage}\end{equation}
\begin{equation}\begin{minipage}[c]{35pc}\label{d.cancel}
$T$ is right and left cancellative {\rm(}i.e., for $x,y\in T$
and $a,b\in T^1,$ $a\,x\,b\ = a\,y\,b \implies x = y),$
\end{minipage}\end{equation}
\begin{equation}\begin{minipage}[c]{35pc}\label{d.pwr}
$T$ is power cancellative
{\rm(}i.e., $x^d = y^d \implies x = y$ for $x,\,y\in T,$ $d>0),$
\end{minipage}\end{equation}
and
\begin{equation}\begin{minipage}[c]{35pc}\label{d.fin_wk_div}
every finite subset of $g(P) \subseteq T$ has
only finitely many weak divisors in $T$
{\rm(}Definition~\ref{D.divisor} above{\rm)}.
\end{minipage}\end{equation}

Then the following conditions are equivalent:
\begin{equation}\begin{minipage}[c]{35pc}\label{d.realization}
For every relation $f(p) = w(x_1,\,\dots\,,\,x_n)$ holding
in $S,$ with $p\in P,$ $w$ a semigroup word, and
$x_1,\,\dots\,,\,x_n\in X,$ there exist
$t_1,\,\dots\,,\,t_n\in T$ satisfying $g(p) = w(t_1,\,\dots\,,\,t_n).$
\end{minipage}\end{equation}
\begin{equation}\begin{minipage}[c]{35pc}\label{d.oEh}
There exists a homomorphism $h:S\to T$ such that $g=hf.$
\end{minipage}\end{equation}
\end{theorem}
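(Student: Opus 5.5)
The implication \eqref{d.oEh}$\Rightarrow$\eqref{d.realization} is immediate: given $h$ with $g=hf$ and a relation $f(p)=w(x_1,\dots,x_n)$, take $t_i=h(x_i)$. Then $g(p)=h(f(p))=w(h(x_1),\dots,h(x_n))$. All the work is in \eqref{d.realization}$\Rightarrow$\eqref{d.oEh}. My plan is to build $h$ on $X$ by a compactness argument over finite candidate sets, and then check that it extends to $S$ and satisfies $g=hf$.

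\emph{Step 1: finitely many candidates for each $h(x)$.} Fix $x\in X$. By \eqref{d.oAdiv}, $x$ divides some $f(p_x)$, say $f(p_x)=a\,x\,b$ with $a,b\in S^1$. Writing $a$ and $b$ as words in $X$ gives a relation $f(p_x)=w(\dots,x,\dots)$. By \eqref{d.realization}, in any realization the value assigned to $x$ is a divisor of $g(p_x)$, hence a weak divisor (take $d=1$) of $\{g(p_x)\}$. By \eqref{d.fin_wk_div} these values lie in a finite set $C_x\subseteq T$. The same holds for any relation containing $x$: multiplying it by the relation for $p_x$ gives a relation in which $x$ still appears, so its realizations can be restricted to values in $C_x$.

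\emph{Step 2: compactness.} For each finite set $F$ of relations (in the sense of \eqref{d.realization}), let $A_F$ be the set of assignments $x\mapsto t_x\in C_x$, for the variables occurring in $F$, that realize \emph{every} relation of $F$ simultaneously. These sets form an inverse system of finite sets under restriction. If each $A_F$ is nonempty, then the product $\prod_x C_x$ is compact and the sets $A_F$ have the finite intersection property, so there is a global assignment $x\mapsto h(x)$ realizing all relations $f(p)=w(x_1,\dots,x_n)$.

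Such a global assignment gives the homomorphism. Suppose two words $u,v$ in $X$ define the same element of $S$, and choose $p$ with $f(p)=a\,u\,b$ for some $a,b\in S^1$. Then $f(p)=a\,u\,b$ and $f(p)=a\,v\,b$ are both relations, so $g(p)=h(a)h(u)h(b)=h(a)h(v)h(b)$. Cancellation \eqref{d.cancel} gives $h(u)=h(v)$. Hence $h$ is well defined on $S$, and taking $w$ to be any expression of $f(p)$ yields $g=hf$.

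\emph{Step 3 (main obstacle): a single assignment for finitely many relations.} Hypothesis \eqref{d.realization} only realizes one relation at a time, so we must show $A_F\neq\emptyset$. Let $F=\{f(p_j)=w_j : j=1,\dots,k\}$, and put $y_j=g(p_j)$. For each exponent vector $e\in\{1,\dots,M\}^k$, consider the single relation
\[f(p_1^{e_1}\cdots p_k^{e_k})=w_1^{e_1}\cdots w_k^{e_k}.\]
By \eqref{d.realization} and Step 1, this relation has a realization $t^{(e)}\in\prod_x C_x$, a finite set of size $K$, say. Colour $e$ by $t^{(e)}$. By Gallai's theorem (the multidimensional van der Waerden theorem), for $M$ large there exist $e$ and $\lambda>0$ such that $e$ and all $e+\lambda\epsilon_j$, $j=1,\dots,k$, get the same colour $t$. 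Write $u_j=w_j(t)$.

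We peel the factors off from the right.
\begin{itemize}
\item Compare the relations for $e$ and for $e+\lambda\epsilon_k$. The second says $(y_1^{e_1}\cdots y_k^{e_k})\,y_k^{\lambda}=(u_1^{e_1}\cdots u_k^{e_k})\,u_k^{\lambda}$. By the first relation the two bracketed prefixes are equal, so left cancellation gives $y_k^\lambda=u_k^\lambda$. Power cancellation \eqref{d.pwr} then gives $y_k=u_k$.
\item Right cancelling $y_k^{e_k}=u_k^{e_k}$ from the relations for $e$ and for $e+\lambda\epsilon_{k-1}$ reduces to the same situation with $k-1$ factors. The same argument yields $y_{k-1}=u_{k-1}$.
\item Continuing in this way gives $y_j=u_j$ for all $j$.
\end{itemize}
Thus $t\in A_F$.

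The delicate point is exactly this combinatorial step, where finiteness of the candidate sets (from \eqref{d.fin_wk_div}) is combined with cancellation and power cancellation. If Gallai's theorem turns out to be more than is needed, I would first try nested exponents $e_j=N^{j}$ with iterated pigeonhole, peeling the coordinates one at a time.
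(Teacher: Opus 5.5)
\textbf{Gap in Step 3.} Apart from Step 3, your outline is the paper's: the easy direction, extending $\eta$ to $h$ by cancellation, and compactness over $\prod_x C_x$. The gap is the claim in Step 3 that the combined relation $f(p_1^{e_1}\cdots p_k^{e_k})=w_1^{e_1}\cdots w_k^{e_k}$ has a realization $t^{(e)}\in\prod_x C_x$, a set of size $K$ independent of $M$.

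Hypothesis \eqref{d.realization} gives only \emph{some} realization of that one relation. Its coordinates are merely divisors of $g(p_1)^{e_1}\cdots g(p_k)^{e_k}$, which changes with $e$. The last sentence of your Step 1 does not help: a realization of $f(q\,p_x)=w\,u_x$ need not realize $f(p_x)=u_x$, so its $x$-coordinate need not divide $g(p_x)$.

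For example, take $P=S=T=(\N\setminus\{0\},+)$, $f=g=\mathrm{id}$, $X=\{x,y\}$ with $x=2$, $y=1$, and the relations $f(2)=x$, $f(1)=y$. The combined relation $2e_1+e_2=e_1t_x+e_2t_y$ has the realization $t_x=1$, $t_y=1+e_1/e_2$ whenever $e_2\mid e_1$, and this is unbounded as $M$ grows. Gallai's theorem needs the number of colours fixed before $M$ is chosen, so the Ramsey step is unsupported. Tellingly, you never use weak divisors with exponent $d>1$, which is exactly what \eqref{d.fin_wk_div} is for.

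\textbf{How to repair it.} Colour $e$ only by $(w_1(t^{(e)}),\dots,w_k(t^{(e)}))$; your peeling uses nothing else. Restrict $e$ to the box $\{M,\dots,2M\}^k$. Then $e_j\le\sum_i e_i\le 2k\,e_j$, so $g(p_1)^{e_1}\cdots g(p_k)^{e_k}$ can be cut into $e_j$ consecutive blocks of $1$ to $2k$ factors. That is, it lies in $A^{e_j}$ for the finite set $A=\bigcup_{m=1}^{2k}\{g(p_1),\dots,g(p_k)\}^m\subseteq g(P)$. Since $w_j(t^{(e)})^{e_j}$ divides it, $w_j(t^{(e)})$ is a weak divisor of $A$, so the colours lie in a finite set independent of $M$.

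Gallai then applies on a translated box inside $\{M,\dots,2M\}^k$, but it is more than you need. Peeling works with a separate $\lambda_j$ for each $j$. A point $e$ whose colour recurs further along each of the $k$ axis-parallel lines through it exists by a counting argument once $M+1>kK$, since each line holds at most $K$ last occurrences of colours.

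In Step 2, adjoin the relations $f(p_x)=u_x$ for the variables of $F$, as the paper does, so that simultaneous realizations automatically lie in $\prod_x C_x$.

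\textbf{The paper's route has the same flaw.} The paper merges two equations into the single equation $g(p^dp')=u^du'$. Its weak-divisor bound controls the values of $u$ but not those of $u'$. In the example above, with $u=\eta(x)$ and $u'=\eta(y)$, the assignment $\eta(x)=1$, $\eta(y)=d+1$ solves $2d+1=d\,\eta(x)+\eta(y)$ for every $d$. So no single $d$ gives an equation equivalent to the pair, and that argument needs the same cure: exponents on both factors that are large and of comparable size.
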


\begin{proof}
Clearly,~\eqref{d.oEh} implies~\eqref{d.realization}
(with $t_i=h(x_i)).$
So what we must prove is the reverse implication.

Observe that a homomorphism $h:S\to T$ is determined by
its restriction to $X;$ and clearly, given any set map
\begin{equation}\begin{minipage}[c]{35pc}\label{d.eta}
$\eta:\,X\to T,$
\end{minipage}\end{equation}
the necessary and sufficient condition for $\eta$ to
extend to a homomorphism $S\to T$ is that
\begin{equation}\begin{minipage}[c]{35pc}\label{d.w12}
for every relation
$w_1(x_1,\,\dots\,,\,x_m) = w_2(x_1,\,\dots,\,x_m)$
holding in $S,$ where $w_1$ and $w_2$ are semigroup words
and $x_1,\,\dots\,,\,x_m\in X,$ the corresponding relation
$w_1(\eta(x_1),\,\dots\,,\,\eta(x_m)) =
w_2(\eta(x_1),\,\dots,\,\eta(x_m))$ holds in~$T.$
\end{minipage}\end{equation}

Given homomorphisms $f:P\to S$ and $g:P\to T$ as in
the statement of the theorem, and a map~\eqref{d.eta},
when will that map determine a homomorphism $h$ such that $g=hf$?
I claim that if
\begin{equation}\begin{minipage}[c]{35pc}\label{d.w=f}
for every relation $f(p)=w(x_1,\,\dots\,,\,x_n)$
holding in $S,$ with $p\in P$ and $x_1,\,\dots,\,x_n\in X,$ one has
$g(p)=w(\eta(x_1),\,\dots,\,\eta(x_n))$ in $T,$
\end{minipage}\end{equation}
then assuming~\eqref{d.oAdiv}-\eqref{d.fin_wk_div},
this will imply that~\eqref{d.w12} holds,
yielding a homomorphism $h:S\to T;$ and clearly,~\eqref{d.w=f}
then shows that $h$ will satisfy the desired relation $g=hf.$

To prove~\eqref{d.w12} assuming~\eqref{d.w=f}, consider
any relation as in the hypothesis of~\eqref{d.w12}.
Let us write $s$ for the common value in $S$
of the two sides of this relation.
Then by~\eqref{d.oAdiv},
$s$ is a divisor of $f(p)$ for some $p\in P.$
Expressing the right and left factors that
carry $s$ to $f(p)$ in terms of
the generating set $X$ (using a list $x_1,\,\dots\,,\,x_n\in X$
extending $x_1,\,\dots\,,\,x_m,$ and allowing empty words
if either or both factors are $1),$ we get a relation
\begin{equation}\begin{minipage}[c]{35pc}\label{d.uw12v}
$\!\!u(x_1,\dots,\,x_n)\,w_1(x_1,\dots,\,x_m)\,
v(x_1,\dots,\,x_n) = f(p) =
u(x_1,\dots,\,x_n)\,w_2(x_1,\dots,\,x_m)\,v(x_1,\dots,\,x_n)\!\!$
\end{minipage}\end{equation}
in $S.$
Regarding this as two equations of the sort appearing
in the hypothesis of~\eqref{d.w=f}, applying~\eqref{d.w=f}
to each, and, since the resulting left-hand sides
are both $g(p),$ equating the right-hand sides,
we get an equation in $T$ which,
by the cancellativity condition~\eqref{d.cancel},
yields the conclusion of~\eqref{d.w12}, as desired.

So,
\begin{equation}\begin{minipage}[c]{35pc}\label{d.iff}
To prove the Theorem, it will suffice to show that
assuming~\eqref{d.realization},
there must exist a map~\eqref{d.eta} for which~\eqref{d.w=f} holds.
\end{minipage}\end{equation}

In constructing such an $\eta,$
the following notation will be useful:
\begin{equation}\begin{minipage}[c]{35pc}\label{d.Sigma}
We shall denote by $\Sigma$ the family of equations
$g(p)=w(\eta(x_1),\,\dots,\,\eta(x_n))$ arising
(as in the conclusion of~\eqref{d.w=f})
from all relations $f(p)=w(x_1,\,\dots\,,\,x_n)$ that
hold in $S,$ with the symbols $\eta(x_1),\,\dots,\,\eta(x_n)$
now regarded as an $\!n\!$-tuple of $\!T\!$-valued unknowns
(though the symbols $g(p)$ will, as in~\eqref{d.w=f},
still denote the indicated constants in~$T).$
\end{minipage}\end{equation}

Below, we shall first deduce from~\eqref{d.realization} that
every {\em finite} subset of $\Sigma$ has a solution
in $T^X,$ then show from this that the whole family $\Sigma$
has such a solution.

To get the result on finite subsets of $\Sigma,$
let us start by showing that given any two equations in $\Sigma,$
\begin{equation}\begin{minipage}[c]{35pc}\label{d.uu'}
$g(p)=u$ \ and \ $g(p')=u',$
\end{minipage}\end{equation}
there exists a nonnegative integer $d$ such that the equation
\begin{equation}\begin{minipage}[c]{35pc}\label{d.u^du'}
$g(p^d\,p')\ =\ u^d\,u',$
\end{minipage}\end{equation}
which also belongs to $\Sigma,$ has the same solution-set
in $T^X$ as the pair of equations~\eqref{d.uu'}.

For this purpose, let $\eta(x_1),\dots,\eta(x_n)$ be the finitely many
variables which appear in one or both of the words
$u$ and $u',$ and consider
assignments of values to $\eta(x_1),\dots,\eta(x_n)$ such
that~\eqref{d.u^du'} holds for at least one value of~$d.$
Every value of $u(\eta(x_1),\dots,\eta(x_n))$
that such an assignment leads to will, in view of~\eqref{d.u^du'},
be a weak divisor (Definition~\ref{D.divisor})
of the $\!2\!$-element set $\{g(p),\ g(p)\,g(p')\},$
so by~\eqref{d.fin_wk_div},
\begin{equation}\begin{minipage}[c]{35pc}\label{d.fin_u-vals}
there are only finitely many values in $T$ that
$u(\eta(x_1),\dots,\eta(x_n))$ can have in solutions
to~\eqref{d.u^du'}, as $d$ ranges over all nonnegative integers.
\end{minipage}\end{equation}

Moreover, I claim that for any such value of
$u(\eta(x_1),\dots,\eta(x_n))$ that is
{\em not} equal to~$g(p),$ the equality~\eqref{d.u^du'} can only
hold for one value of~$d.$
Indeed, assuming that for some $d$ and some
choice of values for $\eta(x_1),\dots,\eta(x_n)$
the equation~\eqref{d.u^du'}
holds, with $g(p)\neq u,$ we see by power-cancellativity
of $T$~\eqref{d.pwr} that %
that choice will make $g(p)^c\neq u^c$ for every $c>0,$ so
if we multiply this inequality on the right by~\eqref{d.u^du'},
right cancellativity of $T$ (condition~\eqref{d.cancel}) shows
that $g(p)^{c+d}g(p')\neq u^{c+d}\,u'.$
So taking for $d$ the least value for which the given
choice of values for $\eta(x_1),\dots,\eta(x_n)$
make~\eqref{d.u^du'} hold, we see that it is the only such value.
Hence, as $u(\eta(x_1),\dots,\eta(x_n))$ ranges over the
finitely many values referred to
in~\eqref{d.fin_u-vals}, only finitely many values of $d$
allow solutions of~\eqref{d.u^du'} with
$g(p)\neq u(\eta(x_1),\dots,\eta(x_n)).$

So let us choose a $d$ that is {\em not} one of those finitely
many values; so that all solutions of~\eqref{d.u^du'}
for that choice of $d$ satisfy $g(p)=u.$
Then by left cancellativity (hypothesis~\eqref{d.cancel}),
those solutions also satisfy $g(p')=u'.$
Thus, for that choice of $d,$ the values of
$\eta(x_1),\dots,\eta(x_n)$ that make~\eqref{d.u^du'} hold are, as
claimed, precisely those that make both equalities of~\eqref{d.uu'}
hold.

Repeatedly applying this result, we see that the solution-set
in $T^X$ of any {\em finite} subset of $\Sigma$ is
equal to the solution-set of a single element of $\Sigma.$
By~\eqref{d.realization}, every member of $\Sigma$
has a nonempty solution-set; hence the above argument
shows that every finite subset of $\Sigma$ does.

Finally, let us show from this that the whole set $\Sigma$
has a nonempty solution-set.

We begin by noting that by~\eqref{d.oAdiv}, each $x\in X$ is
a divisor of some element $f(p_x)$ $(p_x\in P),$ hence
\begin{equation}\begin{minipage}[c]{35pc}\label{d.u_x}
for each $x\in X$
there is some member of $\Sigma,$ say $g(p_x) = u_x,$ such that
the word $u_x$ involves the variable $\eta(x).$
\end{minipage}\end{equation}

For each $x\in X,$ let us fix such an equation $g(p_x)=u_x$ in $\Sigma,$
and denote by $T_x$ the set of divisors in $T$ of the element $g(p_x).$
By~\eqref{d.fin_wk_div} each $T_x$ is finite
(since every divisor of an element $g(p_x)$ is, in particular,
a weak divisor of the singleton set $\{g(p_x)\}).$
Thus, in any solution in the semigroup $T$ of any subsystem
of $\Sigma$ which includes the equation $g(p_x) = u_x,$ the
value given to $\eta(x)$ must be a member of the finite set~$T_x.$

Now let $\Sigma_0$ be any finite subset of $\Sigma,$ let
$\eta(x_1),\dots,\eta(x_n)$ be the variables occurring in
the right-hand sides of the equations in $\Sigma_0,$ and
let $\Sigma_1$ be the finite set obtained by adjoining to
$\Sigma_0$ the additional
equations $g(p_{x_i})=u_{x_i}$ for $i=1,\dots,n.$
Since $\Sigma_1$ is a finite subset
of $\Sigma,$ it will, as we have shown, have
solutions; and since it contains the equations
$g(p_{x_i}) = u_{x_i}$ for $i=1,\dots,n,$
those solutions will have $\!x_i\!$-coordinate in $T_{x_i}$
for $i=1,\dots,n.$
But solutions to $\Sigma_1$ are
also solutions to its subfamily $\Sigma_0,$ so
there are solutions to the latter family
with $\!x_i\!$-coordinate in $T_{x_i}$ for $i=1,\dots,n.$
And since the equations in $\Sigma_0$ involve no variables
but $\eta(x_1),\dots,\eta(x_n),$ we can, in such solutions,
modify the values assigned to all other coordinates
$\eta(x)$ $(x\in X)$
in any way; in particular, replace each such coordinate $\eta(x)$
by a member of the corresponding set $T_x.$
This shows that
\begin{equation}\begin{minipage}[c]{35pc}\label{d.in_prod_T_i}
the solution-set in $T_x$
of every finite subset $\Sigma_0\subseteq\Sigma$
has nonempty intersection with $\prod_{x\in X} T_x.$
\end{minipage}\end{equation}

Let us now regard $\prod_X\,T_x$
as a compact topological space, under the product of the
discrete topologies on the finite factors $T_x,$ and consider each
semigroup word in the variables $\{\eta(x)\,|\,x\in X\}$ as defining a
map $\prod_X\,T_x\to T.$
Regarding $T$ as a discrete space,
each such map is continuous, since it depends on only
finitely many coordinates of $\prod_X\,T_x,$ so the
set of solutions in $\prod_X\,T_x$ to
each equation in $\Sigma$ is closed.

The class of finite subsets of $\Sigma$ is closed
under finite unions, hence the class of their solution-sets in
$\prod_X\,T_x$ is closed under finite intersections.
By the preceding results, all such finite intersections are
nonempty closed sets,
hence the compactness of $\prod_X\,T_x$ shows that the solution-set of
the full family $\Sigma$ is nonempty.
A member of this intersection will be a map~\eqref{d.eta}
satisfying~\eqref{d.w=f},
which by~\eqref{d.iff} completes the proof of the Theorem.
\end{proof}

\section{Application to rank functions on projective modules}\label{S.rank}

The motivation for the above theorem came
from the study of integer-valued
projective rank functions on rings $R,$ i.e.,
functions taking isomorphism classes of finitely
generated projective left $\!R\!$-modules to natural numbers,
which carry direct sums of modules to the sums of the
corresponding natural numbers,
and carry the module $R$ to $1;$ cf.~\cite{DS}.
Let us apply our theorem to that case.

\begin{theorem}\label{T.proj}
Let $R$ be an associative ring, and $X$ a set of
finitely generated nonzero projective left $\!R\!$-modules,
such that every finitely generated projective left $\!R\!$-module is
isomorphic to a direct sum of members of $X.$

Then there exists a nonnegative-integer-valued
projective rank function for $R$
if and only if whenever one has a module isomorphism
\begin{equation}\begin{minipage}[c]{35pc}\label{d.proj}
$R^c\ \cong \ (P_1)^{c_1} \oplus \dots \oplus (P_n)^{c_n}
\ \ \ \ (P_1,\dots,\,P_n\in X,~c,\,c_1,\dots,\,c_n\geq 0),$
\end{minipage}\end{equation}
the integer $c$ is a linear combination
of $c_1,\,\dots\,,\,c_n,$ with nonnegative integer coefficients.

This rank function can be taken to be {\em nondegenerate}
{\rm(}to carry nonzero projective modules to positive
integers{\rm)} if and only if in all such cases,
the integer $c$ can in fact be written as a linear combination
of $c_1,\,\dots\,,\,c_n$ with {\em positive} integer coefficients.
\end{theorem}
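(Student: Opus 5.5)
We plan to deduce this from Theorem~\ref{T.main}, taking $S$ to be the commutative monoid (written additively, but viewed as a semigroup after discarding $0$ if convenient) of isomorphism classes of nonzero finitely generated projective left $R$-modules under $\oplus$, with generating set $X$ (more precisely, the classes of members of $X$). We take $P$ to be the additive semigroup of positive integers, with $f(c)=[R^c]$. For the first statement we take $T=\N$ under addition. For the nondegenerate case we take $T$ to be the positive integers. Since condition~\eqref{d.oAdiv} concerns divisors in $S$, we must be careful with zero. So for the general (possibly degenerate) case we will work with $T$ the positive integers but with $S$ replaced appropriately. Alternatively, and more simply, we first treat the nondegenerate statement and then reduce the degenerate one to it.

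\emph{Nondegenerate case.} Let $T=\{1,2,\dots\}$ under $+$, with $g(c)=c$. We check the hypotheses of Theorem~\ref{T.main}. For~\eqref{d.oAdiv}: every finitely generated projective $M$ is a summand of some $R^c$, so $[M]$ divides $f(c)$. For~\eqref{d.cancel} and~\eqref{d.pwr}: $T$ is cancellative, and $dx=dy$ implies $x=y$. For~\eqref{d.fin_wk_div}: if $ds$ divides (i.e.\ is $\le$) a sum of $d$ elements of a finite set $A$, then $s\le\max A$, so there are only finitely many weak divisors. Condition~\eqref{d.realization} reads as follows: whenever $[R^c]$ equals a sum of generators, with $P_i$ occurring $c_i\ge1$ times, there exist positive integers $t_i$ with $c=\sum c_it_i$. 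Terms with $c_i=0$ just drop out, so this is exactly the stated positivity criterion. Such a relation can only arise for $c\ge1$ when some $P_i$ actually occurs. The case $c=0$ forces all $c_i=0$ in the nonzero setting, which is vacuous. A homomorphism $h:S\to T$ with $h f=g$ is precisely a rank function taking $R$ to $1$ and nonzero modules to positive values, and $h(0)=0$ extends it. Necessity of the condition is immediate from additivity.

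\emph{Possibly degenerate case.} Necessity is again clear. For sufficiency, we plan to pass to a subset $X'\subseteq X$ on which a nondegenerate rank function should live. This is the main obstacle, since Theorem~\ref{T.main} needs positive values to keep the weak-divisor condition and $T=\N$ fails cancellation-free finiteness when $0$ is allowed. Indeed, $0$ is a weak divisor of everything, but more seriously, $u$-values are unrestricted in the compactness step. What we would try first is to work directly with $T=\N$ (including $0$) and re-run the proof of Theorem~\ref{T.main}. Then~\eqref{d.oAdiv},~\eqref{d.cancel},~\eqref{d.pwr} hold, and the set of weak divisors of a finite set is $\{0,\dots,\max\}$, still finite. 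Inspecting the proof, it never used absence of identity in $T$; it only used these four conditions, which hold for the monoid $\N$. Condition~\eqref{d.realization} now allows $t_i\ge0$, matching nonnegative coefficients. The case $c=0$ must be excluded from $P$, but then $R^0$ imposes nothing. The one delicate point is that the hypothesis must also cover relations where $f(p)$ is expressed with some $c_i$ as multiplicities. This is just the statement with the $P_i$ grouped, so the criterion translates exactly. The resulting $h$ is the desired $\N$-valued rank function.
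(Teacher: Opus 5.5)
Your proof is correct and follows the paper's own route: apply Theorem~\ref{T.main} with $S$ the semigroup of classes of nonzero finitely generated projectives, $f(c)=[R^c]$, $g$ the identity, and $T=\N$ (respectively $\N\setminus\{0\}$ in the nondegenerate case), verifying \eqref{d.fin_wk_div} via the natural ordering. Your worries about $T=\N$ in the last paragraph were unfounded, since $\N$ is itself a semigroup satisfying \eqref{d.cancel}--\eqref{d.fin_wk_div}, so the theorem applies directly without re-running its proof; your choice $P=\N\setminus\{0\}$ also sidesteps the issue, glossed over in the paper, that the zero module is not in $S$.
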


\begin{proof}
To get the first assertion, apply the preceding theorem,
taking for both $P$ and $T$ the additive semigroup $\N$ of
nonnegative integers, taking for $S$ the
semigroup of isomorphism classes of nonzero finitely generated
projective $\!R\!$-modules (mumbling the words needed to replace
these proper classes by genuine sets),
for $X$ the set of projective
modules so named above, for $f$ the homomorphism
sending $1$ to the isomorphism class of the
free $\!R\!$-module of rank\ 1, and for $g$ the identity map.
Condition~\eqref{d.oAdiv} of the theorem holds (with ``divisor''
understood to mean ``direct summand'', in view of the
choice of semigroup $S)$ because every finitely generated
projective module is a direct summand in a free module of
finite rank.
Conditions~\eqref{d.cancel} and~\eqref{d.pwr}
(again, translated to additive language) clearly hold
in the semigroup $T=\N,$ and we
see~\eqref{d.fin_wk_div} by noting that an element
of that additive semigroup is a ``divisor'' of another if and only
if it is majorized by that element under the natural ordering
of the integers, and is a ``weak divisor'' of a family if and only
if it is majorized by some member of that family; and
that every nonnegative integer majorizes only finitely many others.
The criterion of Theorem~\ref{T.main} now assumes the desired form
(the ``integer coefficients'' in the statement of the present theorem
corresponding to the $t_i$ of that theorem).

The assertion ot the final paragraph
is obtained by the same argument, using the
semigroup $\N\setminus\{0\}$ rather than $\N$ for $P$ and $T.$
\end{proof}

In~\cite{AS}, generalized projective rank functions, with values
in semigroups $(1/n)\,\N,$ are used to study homomorphisms of
rings $R$ into $n\times n$ matrix rings over division rings.
The same method as above shows that $R$ admits a
$\!(1/n)\,\N\!$-valued projective rank
function if and only if for every isomorphism~\eqref{d.proj}, the
integer $nc$ can be written as a linear
combination of the $c_i$ with nonnegative (or, if we require
the rank function to be nondegenerate, positive) integer coefficients.

\section{Variant conditions and examples}\label{S.var+eg}

In the proof of Theorem~\ref{T.main}, the full strength of
the condition~\eqref{d.fin_wk_div} on weak divisors
was only used in showing that every pair of members of $\Sigma$ has
the same solution-set as a
single member of $\Sigma;$ and that argument was also the only
place where we used the power-cancellativity condition~\eqref{d.pwr}.
In the one later use of condition~\eqref{d.fin_wk_div},
in showing that if every finite subset of $\Sigma$ could
be realized, then so could the whole family $\Sigma,$ that hypothesis
was merely used to show that the set $T_x$ of ordinary divisors
of a single element $g(p_x)$ was finite.
Hence dropping the conditions not used there, we get.

\begin{corollary}[to proof of Theorem~\ref{T.main}]\label{C.varThm}
Assume the hypothesis of Theorem~\ref{T.main}, but without
condition~\eqref{d.pwr} {\rm(}power cancellativity{\rm)}, and
with~\eqref{d.fin_wk_div} weakened to
merely say that every element of $g(P)$ has only
finitely many divisors in $T.$
Then a necessary and sufficient condition
for there to exist a homomorphism $S\to T$ making a commuting
triangle with the given maps
from $P$ is that for every {\em finite set} of relations
\begin{equation}\begin{minipage}[c]{35pc}\label{d.fpi=wi}
$f(p_i)\ =\ w_i(x_1,\,\dots\,,\,x_n)$\hspace{1em}$(i=1,\dots,k)$
\end{minipage}\end{equation}
satisfied in $S$ by elements $x_1,\,\dots\,,\,x_n\in X,$ there
exist $t_1,\,\dots\,,\,t_n\in T$ satisfying the corresponding
relations $g(p_i)=w_i(t_1,\,\dots\,,\,t_n)$ $(i=1,\dots,k).$\qed
\end{corollary}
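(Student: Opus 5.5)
Necessity is immediate: if $h:S\to T$ satisfies $g=hf$, then $t_i=h(x_i)$ realizes any finite family of relations~\eqref{d.fpi=wi} at once. For sufficiency I will reuse the proof of Theorem~\ref{T.main}, keeping only the parts that did not use~\eqref{d.pwr} or the full strength of~\eqref{d.fin_wk_div}.

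The first step is the reduction~\eqref{d.iff}. It used only~\eqref{d.oAdiv} and~\eqref{d.cancel}, so it is still available. Hence it suffices to produce a map $\eta:X\to T$ satisfying~\eqref{d.w=f}, i.e., a solution in $T^X$ of the whole system $\Sigma$ of~\eqref{d.Sigma}. The new hypothesis says exactly that every \emph{finite} subset of $\Sigma$ has a solution in $T^X$. This replaces the portion of the earlier proof that built finite solvability from pairs via~\eqref{d.u^du'}, and that portion was the only place power cancellativity and weak divisors were used. So that whole argument is simply skipped.

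Next comes the compactness step. By~\eqref{d.oAdiv}, for each $x\in X$ fix an equation $g(p_x)=u_x$ in $\Sigma$ whose word involves $\eta(x)$, as in~\eqref{d.u_x}. Let $T_x$ be the set of divisors of $g(p_x)$ in $T$; it is finite by the weakened hypothesis. Any solution of a subsystem containing $g(p_x)=u_x$ gives $\eta(x)$ a value dividing $g(p_x)$, since $\eta(x)$ occurs as a factor of $u_x$. Given a finite $\Sigma_0\subseteq\Sigma$ involving variables $\eta(x_1),\dots,\eta(x_n)$, adjoin the equations $g(p_{x_i})=u_{x_i}$ to get a finite $\Sigma_1$. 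This $\Sigma_1$ is solvable by hypothesis, and its solutions place the $x_i$-coordinates in $T_{x_i}$. Resetting the irrelevant coordinates into their $T_x$ shows that~\eqref{d.in_prod_T_i} holds. Finally, give $\prod_X T_x$ the product of discrete topologies, which is compact by Tychonoff. Each equation's solution set is closed there, because it depends on finitely many coordinates. The finite intersection property then yields a point in the solution set of all of $\Sigma$, and this point is the required $\eta$.

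I expect no real obstacle here. The only care needed is to check that~\eqref{d.iff} and the compactness argument genuinely never invoked~\eqref{d.pwr} or weak divisors, and that a divisor of $g(p_x)$ needs only the weakened finiteness condition. Both checks are direct inspections of the earlier proof.
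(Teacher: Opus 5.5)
Your proposal is correct and is essentially the paper's own argument. The reduction \eqref{d.iff} needs only \eqref{d.oAdiv} and \eqref{d.cancel}, and the finite-solvability hypothesis replaces the pair-merging step via \eqref{d.u^du'}, which was the only place where \eqref{d.pwr} and weak divisors were used; the compactness argument then needs only finiteness of the divisor sets $T_x$ of the elements $g(p_x)\in g(P)$, which is exactly the inspection of the proof of Theorem~\ref{T.main} that the paper uses to justify the corollary.
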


In the opposite direction, there are conditions stronger
than~\eqref{d.fin_wk_div} but less complicated; so
the statement of Theorem~\ref{T.main} with~\eqref{d.fin_wk_div}
replaced by such a condition remains true, though weaker.
One such condition is gotten by assuming the
conclusion of~\eqref{d.fin_wk_div} for {\em all} finite sets
of elements of $T,$ not just those contained in $g(P):$
\begin{equation}\begin{minipage}[c]{35pc}\label{d.T_wk_div}
Every finite subset of $T$ has only finitely many weak divisors.
\end{minipage}\end{equation}

A much stronger condition, which does not require the
concept of weak divisor, and implies all
of~\eqref{d.cancel},~\eqref{d.pwr} and~\eqref{d.T_wk_div}
(hence~\eqref{d.fin_wk_div}) is
\begin{equation}\begin{minipage}[c]{35pc}\label{d.ord_like_N}
$T$ admits a total ordering having the order-type of the
natural numbers, and satisfying\\
$(\forall\,a, b, c\in T)\ \ a < b \implies
ca < cb$ \ and \ $ac < bc.$
\end{minipage}\end{equation}

That a semigroup $T$ satisfying~\eqref{d.ord_like_N}
satisfies~\eqref{d.cancel},~\eqref{d.pwr} and~\eqref{d.T_wk_div}
will be easy to see once we note
a few elementary properties that~\eqref{d.ord_like_N} implies.
First, any idempotent $e\in T$ must be an identity element.
Indeed, if for any $a$ the product $ea$ were $<a$ or $>a,$
then from~\eqref{d.ord_like_N} we would get the same strict inequality
between $e^2 a$ and $ea,$ contradicting the
idempotence of $e;$ so $e$ is a left identity
element, and by the symmetric argument it is a right identity element.
Second, for all $a,\,b$ with $a$ not
an identity element, we have $ab > b.$
For if we had $b > ab$ or $b = ab,$ we would
get $ab > a^2 b,$ respectively $ab = a^2 b.$
In the former case, we could go on to get an infinite descending
chain $b > ab > a^2 b > a^3 b \dots\,,$
contradicting our order-type hypothesis,
while in the latter we would get $a = a^2,$ so
by our previous observation, $a$ would be an identity element.
Again, we similarly have $ba > b.$
It follows that if $a$ is a divisor of $b,$ then $a\leq b,$
and more generally, that a weak divisor of any set
is one of the finitely many elements of $T$ less than or equal to
the largest element of that set.
Given these properties,~\eqref{d.cancel},~\eqref{d.pwr}
and~\eqref{d.T_wk_div} are easily deduced.

Easy examples of such ordered semigroups are given by the
subsemigroups of the additive group of real numbers generated by
unbounded increasing sequences of positive real numbers, such as
$2\frac{1}{2},\,3\frac{1}{3},\,\dots\,,\,{n+n^{-1},}\,\dots.$
If we take the additive semigroup generated by such a sequence
whose members are also linearly independent, such as the powers of
$\pi,$ that semigroup will be free abelian; thus, free abelian
semigroups on countably many generators satisfy~\eqref{d.ord_like_N}.
The same is true of free (nonabelian) semigroups
on countably many generators;
the interested reader can easily find an appropriate ordering.
But instead of establishing \eqref{d.cancel}-\eqref{d.T_wk_div}
for that class of semigroups in this way, let us, after noting
that~\eqref{d.cancel} and~\eqref{d.pwr} clearly hold for
free semigroups, give another pair of
tools for establishing~\eqref{d.fin_wk_div} and~\eqref{d.T_wk_div}:
\begin{equation}\begin{minipage}[c]{35pc}\label{d.fin_to_1'}
If, for $P,$ $S,$ $T,$ $f,$ $g$ as in the first sentence
of Theorem~\ref{T.main},
$T$ admits a finitely-many-to-one homomorphism $j$
to a semigroup which satisfies~\eqref{d.fin_wk_div} with respect to its
subsemigroup $jg(P),$ then $T$ satisfies~\eqref{d.fin_wk_div} with
respect to its subsemigroup~$g(P).$
\end{minipage}\end{equation}
\begin{equation}\begin{minipage}[c]{35pc}\label{d.fin_to_1}
If a semigroup $T$ admits a finitely-many-to-one homomorphism $j$
to a semigroup which satisfies the condition of~\eqref{d.T_wk_div},
then $T$ also satisfies~\eqref{d.T_wk_div}.
\end{minipage}\end{equation}

These statements are trivial to verify.
Now the free semigroup $T$ on countably
many generators $x_1,\,x_2,\,\dots$ can
be mapped to the free semigroup on one generator $x$ (which
we have seen satisfies~\eqref{d.T_wk_div}) by
sending $x_i$ to $x^i,$ and this map is
finitely-many-to-one; hence by~\eqref{d.fin_to_1},
$T$ satisfies~\eqref{d.T_wk_div}.
The same argument applies to
free semigroups in any semigroup variety $V$ such that
\begin{equation}\begin{minipage}[c]{35pc}\label{d.not_x^m=x^n}
$V$ does not satisfy any identity $x^m = x^n$ with $m\neq n,$
\end{minipage}\end{equation}
(equivalently, varieties in which the free object on
one generator is isomorphic to the semigroup of positive integers.
Some varieties satisfying~\eqref{d.not_x^m=x^n} have, and some
do not have the property that their free semigroups also
satisfy~\eqref{d.cancel} and~\eqref{d.pwr}.
An example which satisfies neither is the variety defined by
the identity saying that all products of two elements
are central: $xyz=zxy.)$

Of course, in any nontrivial variety $V$ of semigroups,
free objects on {\em uncountably} many generators do
not admit finitely-many-to-one homomorphisms to the
free object on one generator.
However, it is not hard to see that a semigroup
satisfies~\eqref{d.T_wk_div}
if and only if all of its countable subsemigroups do.
Hence, {\em all} free semigroups in varieties
satisfying~\eqref{d.not_x^m=x^n} satisfy~\eqref{d.T_wk_div}.

For an example showing that condition~\eqref{d.ord_like_N} on an
ordered semigroup, of having the order-type of the natural numbers,
is stronger than that of being countable and well-ordered,
and also stronger than being generated by a subset
having the order-type of the natural numbers,
let $T$ be the additive subsemigroup of $\mathbb{R}$ generated
by $1/2,\ 2/3,\ 3/4,\,\dots,\,n/(n{+}1),\,\dots\,.$
Since $T$ is generated by a well-ordered
set of positive real numbers, it is
well-ordered~\cite[Theorem~III.2.9, p.\,123]{UA}.
But for any $p\leq q$ in this semigroup, it is not hard to show
that $p$ is a weak divisor of $\{q\}.$
(For the $d$ in the definition of weak divisor,
take a common denominator of $p$ and $q,$ and
remember that since $T$ contains $1/2,$ it
contains all positive integers, so every integer is a
``divisor'' in $T$ of every larger integer.)
Hence, the singleton set $\{1\}$ has infinitely many weak divisors
in this semigroup.
I do not know whether there is a counterexample
to the conclusion of Theorem~\ref{T.main} with this $T$
in place of a $T$ satisfying~\eqref{d.fin_wk_div}.

We remark that in a semigroup $T$
satisfying~\eqref{d.cancel}-\eqref{d.fin_wk_div},
the operation ``the set of weak divisors of'' on
subsets of $T$ need not be idempotent.
For example, within the free semigroup on two generators
$x$ and $y,$ let $T$ be the subsemigroup
generated by $x^2,$ $x^3$ and $y.$
Thus, $T$ consists of those words in $x$ and $y$ in
which every occurrence of $x$ is adjacent to another
occurrence of $x.$
Consider the weak divisors in $T$ of the singleton set $\{x^3 y\}.$
We find that $x^2$ is not a weak divisor of this set:
no power of $x^3 y$ can be factored in such a way that
one of the factors is $x^2$ and the others belong to $T.$
However, $x^3,$ being a divisor of $x^3 y$ in $T,$
is a weak divisor of $\{x^3 y\};$ and
$x^2$ is a weak divisor of any subset of $T$
containing $x^3,$ since $(x^2)^2$ divides $(x^3)^2$ in $T.$
Hence $x^2$ does lie in the set of weak divisors of the
set of weak divisors of $\{x^3 y\}.$

On the other hand, it is not hard to show that on subsets
of a {\em commutative} semigroup, the ``set of weak divisors''
operation {\em is} idempotent; and that in a free semigroup
in the variety of all semigroups,
the weak divisors of a set are simply the divisors
of its members, so that there, too, the
``set of weak divisors'' operation is idempotent.

\end{document}